
\documentclass{article}
\usepackage{amssymb}
\usepackage{amsmath}
\usepackage{amsthm}

\textwidth=150mm
\textheight=220mm
\oddsidemargin=-0mm
\evensidemargin=-0mm
\topmargin=-5mm

\numberwithin{equation}{section}

\theoremstyle{plain}
	\newtheorem{thm}{Theorem}[section]
	\newtheorem{lem}{Lemma}[section]

\theoremstyle{definition}

\theoremstyle{remark}
	\newtheorem{rem}{Remark}[section]

\newcommand{\MP}[4]{
\Phi^{#1}_{#2}\left(
\arraycolsep=1pt
\renewcommand{\arraystretch}{0.8}
\begin{array}{cccccccc}#3\end{array}\Big|
\begin{array}{cccccccc}#4\end{array}
\renewcommand{\arraystretch}{1.0}
\right)
}


\begin{document}
\title{\bf Transformation formulas for bilinear sums of basic hypergeometric series}
\author{Yasushi KAJIHARA}
\date{}
\maketitle

{
\allowdisplaybreaks 

\begin{center}
{\it Dedicated to Professor Richard  Askey for his 80th birthday}
\end{center}

\begin{abstract}
A master formula of transformation formulas for bilinear sums of basic hypergeometric series
is proposed. 
It is  obtained from the author's previous results on
a transformation formula for Milne's multivariate generalization of basic hypergeometric
series of type $A$ with different dimensions and it can be considered as a 
generalization of the Whipple-Sears transformation formula for terminating balanced ${}_4 \phi_3$
series. 
As an application of the master formula, the one variable cases of some transformation formulas
for bilinear sums of basic hypergeometric series are given as examples.
The bilinear transformation formulas seem to be new in the literature,
even in one variable case.  
\end{abstract}

\medskip

\section{Introduction}

Classical orthogonal polynomials
(or orthogonal polynomials of (basic) hypergeometric type), which
notion was introduced in G.E~Andrews and R.~Askey \cite{AA}, 
have been investigated in various aspects, and important applications
to other branches of mathematics and related areas were found.
Askey and Wilson \cite{AW} arranged the classical orthogonal polynomials
for $q=1$ in a scheme which soon became known as the {\it Askey scheme}.
A $q$-Askey scheme was also arranged.
For the proofs of various properties of these polynomials,
transformation and summation formulas of (basic) hypergeometric series
turned out to be useful. 

On the other hand, S.C.~Milne \cite{MilneG} has introduced 
a class of multivariate generalizations of basic hypergeometric series
which are nowadays called as $A_n$ basic hypergeometric series
(or basic hypergeometric series in $SU(n+1)$).
Transformation and summation formulas for $A_n$ hypergeometric series 
including their elliptic generalization,
and applications to other branches in mathematics and related areas 
have been investigated by many authors.

Among these results, the present author \cite{Kaji1} obtained a number of 
transformation formulas which relate (mainly basic) hypergeometric series 
of type $A$ with different dimensions
(see also \cite{KajiS}, \cite{KajiR}, \cite{KajiBDT} and \cite{KajiSymm}).
In this paper, we propose a master formula (\eqref{MF} below) of transformation 
formulas for bilinear sums of basic hypergeometric series. 
We obtain it from our  previous results on the Euler transformation formula \cite{Kaji1}
for  basic hypergeometric series of type $A$ with different dimensions 
and it can be considered as a generalization of the Whipple-Sears transformation formula:
\begin{eqnarray}\label{SearsT1}
{}_{4} \phi_3
\left[
	\begin{matrix}
		a, b, c, q^{-N}\\
 		d, e, f,\\
\end{matrix}
;q; q
\right] 
&=& 
\frac{(e/a, d e / b c)_N}
{(e, d e / a b c)_N}
{}_{4} \phi_3
\left[
	\begin{matrix}
		a, d/b, d/c\\
 		d, d f / b c, d e / b c\\
\end{matrix}
; q; q
\right],
\quad 
(a b c = d e f q^{N-1})
\end{eqnarray}
for terminating balanced ${}_4 \phi_3$ series. 
As an application of the master formula \eqref{MF}, we give one variable cases of 
some transformation formulasfor bilinear sums of basic hypergeometric series as examples.
The bilinear transformation formulas involve high degree of freedom of parameters
and seem to be new in the literature, even in the one variable case.
What is remarkable is that transformations for strictly multivariate basic 
hypergeometric series of type $A$, especially with different dimensions, 
may shed a light to future investigations of basic hypergeometric series and 
related classical orthogonal polynomials, even in one variable case.
It is expected that the bilinear transformations in this paper have applications 
to the moment representations(see Ismail and Stanton \cite{IsSt1}, \cite{IsSt2}) 
and the Poisson kernel (see Rahman \cite{R1}, \cite{R2}) 
for classical orthogonal polynomials. Furthermore, it would be interesting 
if the bilinear transformations can be useful for deeper understanding 
of fundamental properties of classical orthogonal polynomials
(see the lecture notes by Askey \cite{AskeyOPSF}), such as orthogonality, addition formulas 
(see Noumi and Mimachi \cite{NouM}),  linearization of the products, connection coefficients, 
positivity and, in particular, convolution structures (see Koelink and van der Jeugt \cite{KoJe1}).

\section{Notation and necessary tools}

\medskip

\indent
In this section, we give some notations for basic hypergeometric series
and we recall some results of our previous work \cite{Kaji1}.
In this paper we basically follow the notation 
from the book by Gasper and Rahman \cite{GRBook}.  
Let $q$ be a complex number under the condition $0 < |q|< 1$.   
Define the $q$-shifted factorial as 

\begin{equation}
(a)_\infty := (a;q)_\infty =\prod_{n \in \mathbb N}(1 - a q^n), \quad 
(a)_k := (a;q)_k = \frac{(a)_\infty}{(a q^k)_\infty} 
\quad \mbox{for} 
\ k \in \mathbb C. 
\end{equation}
Namely unless otherwise indicated, we omit the basis $q$.
We will write
\begin{equation}
(a_1, a_2, \cdots , a_n)_k := (a_1)_k (a_2)_k \cdots (a_n)_k.
\end{equation}
We denote the basic hypergeometric series ${}_{n+1} \phi_n$ as 
\begin{equation}\label{BHSdef}
{}_{n+1} \phi_n \left[
	\begin{matrix}
		a_0, \{ a_i \}_{n}\\
 			\{ c_i \}_{n}
\end{matrix}
;q; u
\right] 
:=
{}_{n+1} \phi_n \left[
	\begin{matrix}
		a_0, a_1, \ldots  a_{n}\\
 			c_1,  \ldots  c_n
\end{matrix}
;q; u
\right] 
= \sum_{k \in \mathbb N} 
\frac{(a_0, a_1, \ldots , a_n)_k }{(q, c_1, \ldots , c_n)_k} u^k.
\end{equation}

A ${}_{n+1} \phi_{n}$ series is called well-poised if 
$a_0 q = a_1 c_1 = \cdots = a_n c_n$. In addition, if  
$a_1 = q \sqrt{a_0}$ and $a_2 = - q \sqrt{a_0}$ then 
the ${}_{r+1} \phi_r $ is called very well-poised.
Throughout of this paper, we denote the very well-poised basic 
hypergeometric series ${}_{r+1} \phi_r $ as 
${}_{r+1} W_r$ series defined by the following:

\begin{eqnarray}\label{DefVWP}
&&{}_{n+1} \phi_n
	\left[
		\begin{matrix}
			& a_0, & q \sqrt{a_0},&  - q \sqrt{a_0},& a_3,& \ldots ,&  a_n \\
			& & \sqrt{a_0},& - \sqrt{a_0},& a_0 q / a_3,& \ldots, 
			&  a_0 q /a_n 
		\end{matrix};
		q; u
	\right]\nonumber\\
&& 
\quad \quad
=  
\sum_{k \in {\mathbb N}} 
\frac{1 - a_0 q^2 }{1 - a_0}
\frac{(a_0)_k (a_3)_k \cdots (a_n)_k}
{(q)_k  (a_0 q / a_3)_k \cdots (a_0 q /a_n)_k}
u^k	
\nonumber
\\
&&
\quad \quad \quad \quad 
:= 
{}_{n+1} W_n
	\left[
		 a_0; a_3, \ldots ,  a_n; 
		 q; u
	\right].
\end{eqnarray}

Our fundamental tool to derive the master formula \eqref{MF} will be the 
multiple Euler transformation formula for basic hypergeometric series of
type $A$:

\begin{thm} {\bf (Theorem 1.1 of \cite{Kaji1})}
\begin{eqnarray}\label{ETG}
&&
\sum_{\gamma \in {\mathbb N}^n}
u^{|\gamma|} 
\frac{\Delta (x q^{\gamma})}{\Delta (x)}
\prod_{1 \le i, j \le n}
\frac{(a_j x_i / x_j)_{\gamma_i}} {(q x_i / x_j)_{\gamma_i}}
\prod_{1 \le i \le n, 1 \le k \le m}
\frac{(b_k x_i y_k )_{\gamma_i}}{(c x_i y_k )_{\gamma_i}}
\\
&& \quad \quad 
= \
\frac{(A B u/ c^m)_\infty}
{(u)_\infty}
\
\sum_{\delta \in {\mathbb N}^m}
 \left( \frac{A B u}{ c^m } \right)^{|\delta|} 
\frac{\Delta (y q^{\delta})}{\Delta (y)}
\nonumber\\
&&  \quad \quad \quad 
\ \times \
\prod_{1 \le k, l \le m}
\frac{((c / b_l^{}) y_k / y_l)_{\delta_k}}{(q y_k / y_l)_{\delta_k}}
\prod_{1 \le i \le n, 1 \le k \le m}
\frac{((c /a_i^{}) x_i y_k )_{\delta_k}}
{(c x_i y_k)_{\delta_k}} 
\nonumber
\end{eqnarray}
where $A := a_1 a_2 \cdots a_n,  B := b_1 b_2 \cdots b_m$ and 
\begin{equation}
\Delta(x) := \prod_{1 \le i < j \le n}
(x_i - x_j)
\qquad  \mbox{and}
\qquad
\Delta(x q^\gamma) := \prod_{1 \le i < j \le n}
(x_i q^{\gamma_i} - x_j q^{\gamma_j})
\end{equation}
are the Vandermonde determinant for the sets of variables $x=(x_1, \cdots , x_n)$
and $x^\gamma = (x_1 q^{\gamma_1}, \cdots,\allowbreak
x_n q^{\gamma_n})$,  respectively.
\end{thm}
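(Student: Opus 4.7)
The plan is to establish \eqref{ETG} by a three-step strategy that is typical of $A_n$ basic hypergeometric identities: expand the $c$-dependent mixed product on the LHS by a multivariate $q$-binomial/Cauchy expansion so as to introduce an auxiliary summation variable $\delta \in \mathbb{N}^m$, interchange the order of summation, and then evaluate the remaining inner $\gamma$-sum by a known $A_n$ summation theorem. The duality between the roles of $(x_i)_{i=1}^n$ and $(y_k)_{k=1}^m$ that one sees in \eqref{ETG} should arise precisely from the direction in which this expansion is performed.

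First I would rewrite the mixed product
\[
\prod_{1 \le i \le n,\ 1 \le k \le m}\frac{(b_k x_i y_k)_{\gamma_i}}{(c x_i y_k)_{\gamma_i}}
\]
by means of a multivariate $q$-binomial/Cauchy identity: one introduces auxiliary variables $\delta=(\delta_1,\ldots,\delta_m) \in \mathbb{N}^m$ indexed by the $y$-coordinates and expands the above ratio as a multiseries containing the Vandermonde ratio $\Delta(yq^\delta)/\Delta(y)$, the factors $((c/b_l)y_k/y_l)_{\delta_k}/(qy_k/y_l)_{\delta_k}$, residual $\gamma$-dependent factors of the shape $(cx_iy_kq^{\delta_k})_{\gamma_i}$, and an appropriate monomial in $u$ and the $y_k$. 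This is the $A_m$-Cauchy mechanism that moves the $c$-factor out of the $(x,y)$-mixed product onto an $A_m$-series in the $y$-variables alone.

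Next, I would interchange the $\gamma$- and $\delta$-sums. The inner $\gamma$-sum then carries the Vandermonde prefactor $\Delta(xq^\gamma)/\Delta(x)$, the parameters $a_j x_i/x_j$ over denominators $qx_i/x_j$, and the $\delta$-shifted data $cx_iy_kq^{\delta_k}$, summed against $u^{|\gamma|}$. The crucial point is that this inner sum is an instance of Milne's $A_n$ extension of the $q$-Chu--Vandermonde/$q$-Gauss summation. Its evaluation collapses the $\gamma$-sum to a closed product, contributing exactly the prefactor $(ABu/c^m)_\infty/(u)_\infty$ together with the factors $((c/a_i)x_iy_k)_{\delta_k}/(cx_iy_k)_{\delta_k}$ appearing on the RHS of \eqref{ETG}.

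Finally, collecting the surviving terms and simplifying via the elementary identity $(a)_{\alpha+\beta}=(a)_\alpha(aq^\alpha)_\beta$ recovers the right-hand side of \eqref{ETG}. The main technical obstacle lies in the first step: one has to choose the precise form of the multivariate expansion so that the inner $\gamma$-sum lands in the exact parameter range for which the $A_n$ summation applies, and so that the exchanges $c \mapsto c/a_i$ and $c \mapsto c/b_l$ come out correctly after regrouping. Should convergence of the non-terminating sums become an issue, one would first treat a terminating specialization (say $a_n=q^{-N}$) and then extend by analytic continuation in the parameters. A useful consistency check throughout is the manifest self-inverse structure of \eqref{ETG}: applying the transformation to the RHS with $n \leftrightarrow m$, $x \leftrightarrow y$, $a_j \mapsto c/b_j$, $b_k \mapsto c/a_k$, $u \mapsto ABu/c^m$ sends it back to the LHS.
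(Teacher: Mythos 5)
First, note that the paper you are working from contains no proof of this statement: it is quoted verbatim as Theorem 1.1 of \cite{Kaji1}, so the only benchmark is that source, where the identity is established by a substantially more involved kernel/duality analysis for $A_n$ series rather than by a short reduction to standard summation theorems. Measured against that, your outline delegates all of the real work to two steps that are neither stated precisely nor justified, and at least one of them fails as described. (i) The ``multivariate $q$-binomial/Cauchy identity'' you invoke --- an expansion of the finite, $\gamma$-dependent product $\prod_{i,k}(b_k x_i y_k)_{\gamma_i}/(c x_i y_k)_{\gamma_i}$ into an $A_m$ sum over $\delta$ carrying $\Delta(yq^\delta)/\Delta(y)$, the factors $((c/b_l)y_k/y_l)_{\delta_k}/(qy_k/y_l)_{\delta_k}$ and residual factors $(c x_i y_k q^{\delta_k})_{\gamma_i}$ --- is not an off-the-shelf result; a kernel expansion of this shape is essentially a lemma of the same depth as \eqref{ETG} itself (in \cite{Kaji1} identities of this kind are the main technical content), so invoking it without statement, proof, or precise citation begs the question. (ii) Even granting such an expansion, the inner $\gamma$-sum you describe is not an instance of Milne's $A_n$ $q$-Chu--Vandermonde/$q$-Gauss theorem when $m\ge 2$: it still contains $m$ columns of mixed factors $(c x_i y_k q^{\delta_k})_{\gamma_i}$, i.e.\ it is again an $A_n$ ${}_{m+1}\phi_m$-type series, whereas the $q$-Gauss theorem evaluates a series with a single such column at one specific balanced argument. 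Moreover, any evaluation of the $\gamma$-sum by an $A_n$ $q$-binomial or $q$-Gauss theorem can depend on $\delta$ only through shifts of the argument or of single-row parameters, and so cannot by itself produce the jointly $(i,k)$-indexed finite products $((c/a_i)x_i y_k)_{\delta_k}/(c x_i y_k)_{\delta_k}$ required on the right-hand side of \eqref{ETG}.

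A concrete sanity check exposes the structural problem already in the base case $n=m=1$, $x_1=y_1=1$: expanding $(b)_\gamma/(c)_\gamma$ by the $q$-binomial theorem, interchanging the sums, and resumming the inner sum yields Heine's \emph{first} transformation, ${}_2\phi_1(a,b;c;q,u)=\frac{(b,au)_\infty}{(c,u)_\infty}\,{}_2\phi_1(c/b,u;au;q,b)$, not the Euler form \eqref{3rdHeine} with argument $abu/c$; the latter only follows after iterating Heine-type steps three times. So a single expand--interchange--evaluate pass does not land on \eqref{ETG}, and in the mixed-dimension setting the required iteration (or the genuinely different kernel machinery of \cite{Kaji1}) is precisely what remains to be constructed. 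Your fallback of proving a terminating case and continuing analytically has the additional hazard of circularity: the natural terminating reduction of \eqref{ETG} is a mixed-dimension Saalsch\"utz-type summation, which in this framework is usually derived \emph{from} \eqref{ETG}. As it stands, the proposal is a plausible research plan, with the correct one-variable prototype in mind, but not a proof.
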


\begin{rem}
In the case $m=n=1$ and $x_1 = y_1 = 1$, \eqref{ETG} reduces to the third Heine 
transformation formula for basic hypergeometric series ${}_2 \phi_1$ (a basic analogue of Euler 
transformation formula for the Gauss hypergeometric series ${}_2 F_1$):
\begin{eqnarray}\label{3rdHeine}
{}_{2} \phi_1
\left[
	\begin{matrix}
		a, b\\
 		c\\
\end{matrix}
;q; u
\right] 
&=& 
\frac{(ab u /c)_\infty}
{(u)_\infty}
{}_{2} \phi_1
\left[
	\begin{matrix}
		c/b, c/a\\
 		c\\
\end{matrix}
; q; ab u / c
\right].
\end{eqnarray}
\end{rem}
Here we present the multiple Euler transformation formula \eqref{ETG} in a slightly 
different expression as \cite{Kaji1}.
We note that \eqref{ETG} is valid for any pair of positive integers $n$ and $m$
and \eqref{ETG} is a transformation formula between $A_n$ ${}_{m+1} \phi_{m}$ series
and $A_m$ ${}_{n+1} \phi_{n}$ series. 
The definitions and terminologies for $A_n$ basic hypergeometric series 
can be found in \cite{KajiBDT}.

\section{Master formula}

\medskip

 In this section, we present the master formula \eqref{MF}.

First, notice that \eqref{ETG} is an identity for formal power series of the 
variable $u$.
Set the homogeneous part in $A_n$ basic hypergeometric series in \eqref{ETG} as $\Phi_N^{n,m}$:
\begin{eqnarray}
&&\MP{n,m}{N}
{ \{a_i \}_n \\ \{ x_i \}_n}
{  \{ b_k y_k \}_m \\ \{c y_k \}_m}
:=
\sum_{\gamma \in {\mathbb N}^n, | \gamma | = N}
\frac{\Delta (x q^{\gamma})}{\Delta (x)}
\\
&& \quad \quad \quad \quad \quad \quad 
\times    
\prod_{1 \le i, j \le n}
\frac{(a_j x_i / x_j)_{\gamma_i}} {(q x_i / x_j)_{\gamma_i}}
\prod_{1 \le i \le n, 1 \le k \le m}
\frac{(b_k x_i y_k / x_n y_m)_{\gamma_i}}{(c x_i y_k / x_n y_m)_{\gamma_i}}.
\nonumber
\end{eqnarray}
The multiple Euler transformation \eqref{ETG} can be expressed in terms 
of $\Phi^{n,m}_N$. Namely, it can be stated as: 

\begin{eqnarray}\label{rETG}
&&
\sum_{K \in \mathbb N}
\MP{n,m}{K}
{ \{a_i \}_{n_1} \\ \{ x_i \}_{n_1}}
{  \{ b_k  y_k \}_{m_1} \\ \{ c y_k \}_{m_1}}
u^K
=
\frac{( A B u /c^m )_\infty}{( u )_\infty}
\\
&& \quad \quad \quad \quad \quad
\times 
\sum_{L \in \mathbb N}
\MP{m,n}{L}
{ \{ c / b_k \}_{m_1} \\ \{ y_k \}_{m_1}}
{  \{ (c / a_i )  x_i \}_{n_1} \\ \{ c x_i \}_{n_1}}
\left(
\frac{ A B u }{c^{m}}
\right)^{L}.
\nonumber
\end{eqnarray}

Now consider the product of the multiple series

\begin{eqnarray}\label{ProdM}
&&
\left( \
\sum_{K \in \mathbb N}
\MP{n_1,m_1}{K}
{ \{a_i \}_{n_1} \\ \{ x_i \}_{n_1}}
{  \{ b_k  y_k \}_{m_1} \\ \{ c y_k \}_{m_1}}
u^K \ 
\right)
\\
&& \quad \quad 
\times
\left(  \
\sum_{L \in \mathbb N}
\MP{n_2,m_2}{L}
{ \{ f / e_p \}_{n_2} \\ \{ z_p \}_{n_2}}
{  \{ ( f / d_s ) w_s \}_{m_2} \\ \{ f w_s \}_{m_2}}
\left(
\frac{f^{n_2} u}{DE}
\right)^{L} \ 
\right),
\nonumber
\end{eqnarray}
under the restriction $A B / c^{m_1} = DE /f^{n_2}$.
By virtue of multiple Euler transformation \eqref{ETG}, we obtain the following 
formal power series identity of variable $u$
under the condition above:
\begin{eqnarray}\label{pMF}
&&
\left( \
\sum_{K \in \mathbb N}
\MP{n_1,m_1}{K}
{ \{a_i \}_{n_1} \\ \{ x_i \}_{n_1}}
{  \{ b_k  y_k \}_{m_1} \\ \{ c y_k \}_{m_1}}
u^K \
\right)
\\
&& \times 
\left( \
\sum_{L \in \mathbb N}
\MP{n_2,m_2}{L}
{ \{ f / e_p \}_{n_2} \\ \{ z_p \}_{n_2}}
{  \{ ( f / d_s ) w_s \}_{m_2} \\ \{ f w_s \}_{m_2}}
\left(
\frac{f^{n_2} u}{D E}
\right)^{L}
\ \right)
\nonumber
\\
&=&
\left( \
\sum_{K \in \mathbb N}
\MP{m_1,n_1}{K}
{ \{ c / b_k \}_{m_1} \\ \{ y_k \}_{m_1}}
{  \{ (c / a_i )  x_i \}_{n_1} \\ \{ c x_i \}_{n_1}}
\left(
\frac{c^{m_1} u}{A B}
\right)^{K}
\ \right)
\nonumber
\\
&&
\times
\left( \
\sum_{L \in \mathbb N}
\MP{m_2,n_2}{L}
{ \{ d_s \}_{m_2} \\ \{ w_s \}_{m_2}}
{  \{ e_p  z_p \}_{n_2} \\ \{ f z_p \}_{n_2}}
u^L
\ \right).
\nonumber
\end{eqnarray}
By taking the coefficient of $u^N$ in the identity above,
we arrive at the following:
\begin{thm}
\begin{eqnarray}\label{MF}
&&
\sum_{K \in \mathbb N}
\MP{n_1,m_1}{K}
{ \{a_i \}_{n_1} \\ \{ x_i \}_{n_1}}
{  \{ b_k  y_k \}_{m_1} \\ \{ c y_k \}_{m_1}}
\MP{n_2,m_2}{N-K}
{ \{ f / e_p \}_{n_2} \\ \{ z_p \}_{n_2}}
{  \{ ( f / d_s ) w_s \}_{m_2} \\ \{ f w_s \}_{m_2}}
\left(
\frac{f^{n_2}}{DE}
\right)^{N-K}
\\
&=&
\sum_{L \in \mathbb N}
\MP{m_1,n_1}{L}
{ \{ c / b_k \}_{m_1} \\ \{ y_k \}_{m_1}}
{  \{ (c / a_i )  x_i \}_{n_1} \\ \{ c x_i \}_{n_1}}
\MP{m_2,n_2}{N-L}
{ \{ d_s \}_{m_2} \\ \{ w_s \}_{m_2}}
{  \{ e_p  z_p \}_{n_2} \\ \{ f z_p \}_{n_2}}
\left(
\frac{c^{m_1}}{AB}
\right)^{L}
\nonumber
\end{eqnarray}
under the condition
\begin{equation}
\label{tbc}
AB /c^{m_1} = DE / f^{n_2}.
\end{equation}
\end{thm}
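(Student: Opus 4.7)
The plan is to obtain \eqref{MF} as the coefficient of $u^N$ in the formal power series identity \eqref{pMF}, so the real task is to prove \eqref{pMF}; this in turn will be done by applying the multiple Euler transformation \eqref{rETG} to each of the two factors on the left-hand side of \eqref{ProdM}.

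First I would apply \eqref{rETG} directly to the first factor $\sum_K \Phi^{n_1,m_1}_K(\{a_i\},\{x_i\};\{b_ky_k\},\{cy_k\})\, u^K$ of \eqref{ProdM}. This produces a prefactor $(ABu/c^{m_1})_\infty/(u)_\infty$ and converts the series into one of shape $\Phi^{m_1,n_1}$ with the dualized parameters $\{c/b_k\},\{y_k\};\{(c/a_i)x_i\},\{cx_i\}$, matching the first factor on the right of \eqref{pMF}.

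Next I would observe that the second factor of \eqref{ProdM} is itself in the right-hand-side form of \eqref{rETG}: under the parameter dictionary $c\leftrightarrow f$, $b_k\leftrightarrow e_p$, $y_k\leftrightarrow z_p$, $a_i\leftrightarrow d_s$, $x_i\leftrightarrow w_s$, with the dimensions $(n,m)$ of \eqref{rETG} playing the role of $(m_2,n_2)$, it matches the right-hand side of a mirror instance of \eqref{rETG} whose left-hand side is the series $\sum_L \Phi^{m_2,n_2}_L(\{d_s\},\{w_s\};\{e_pz_p\},\{fz_p\})\, u^L$. Running \eqref{rETG} in reverse therefore expresses the second factor as $(u)_\infty/(DEu/f^{n_2})_\infty$ times this latter series.

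The balance condition $AB/c^{m_1}=DE/f^{n_2}$ is exactly what is needed to make the two Pochhammer prefactors produced in the two Euler applications reciprocate: under this condition $(ABu/c^{m_1})_\infty=(DEu/f^{n_2})_\infty$, so the prefactors cancel and the product of the two transformed factors is precisely the right-hand side of \eqref{pMF}. To conclude, \eqref{MF} is just the Cauchy-product coefficient of $u^N$ in \eqref{pMF}: on the left this produces $\sum_K \Phi^{n_1,m_1}_K\cdot \Phi^{n_2,m_2}_{N-K}\cdot (f^{n_2}/DE)^{N-K}$, and on the right $\sum_L \Phi^{m_1,n_1}_L\cdot (c^{m_1}/AB)^L\cdot \Phi^{m_2,n_2}_{N-L}$, exactly as claimed.

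The main obstacle is the parameter bookkeeping in the second step — correctly matching the parameters of the second factor of \eqref{ProdM} against the right-hand side of \eqref{rETG} and verifying that the Pochhammer prefactor produced is the precise reciprocal of the one generated from the first factor under the balance condition. After that matching is in place, the cancellation of Pochhammers and the extraction of the $u^N$-coefficient via the Cauchy product are entirely mechanical.
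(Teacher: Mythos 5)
Your proposal follows the paper's own derivation essentially verbatim: the paper likewise forms the product \eqref{ProdM}, applies the multiple Euler transformation \eqref{rETG} to each factor so that the two infinite-product prefactors cancel under the totally balancing condition \eqref{tbc}, and then extracts the coefficient of $u^N$ from \eqref{pMF} to obtain \eqref{MF}. One caveat about the bookkeeping you deferred: \eqref{rETG} produces the dual series in powers of $ABu/c^{m_1}$ and $DEu/f^{n_2}$, so the matching and prefactor cancellation come out with those arguments, which means the scaling factors in \eqref{ProdM}, \eqref{pMF} and \eqref{MF} should be read as $\left(DE/f^{n_2}\right)^{N-K}$ and $\left(AB/c^{m_1}\right)^{L}$ rather than the printed reciprocals (apparently a typo in the paper); your sketch, like the paper, passes over this inversion, but the method itself is exactly the paper's.
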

 Hereafter, we call \eqref{MF} the {\it master formula}
and \eqref{tbc} the {\it totally balancing condition}" .

\section{Bilinear transformation formulas}

\medskip

 In this section, we present three transformation formulas for bilinear sums of 
basic hypergeometric series. 

 One of the most remarkable and fundamental features of multiple hypergeometric series is that 
the homogeneous part of multiple hypergeometric series can be expressed in terms of 
(multiple) very well-poised hypergeometric series. 

\begin{lem}
\begin{eqnarray}\label{Phi-W}
&&
\MP{2,m}{N}
{ \{a_i \}_{2} \\ \{ x_i \}_{2}}
{  \{ b_k y_k \}_m \\ \{c y_k \}_m }
=
\frac{(a_{2})_N}{(q)_N}
\frac
{(a_{2} x_{2} / x_1)_N}
{(x_{2} / x_1)_N}
\prod_{1 \le k \le m}
\frac
{(b_{k} x_{2}  y_k)_N}
{(c x_{2} y_k)_N}
\\
&& \
\times \
{}_{2 m + 6} W_{2 m + 5}
\left[
{q^{-N} / x_{2} };
a_1, { \{b_k y_k \}_{m}, a_{2} /x_{2}, }
{\{ (q^{1-N}/ x_{2})c^{-1} y_k^{-1} \}_{m}, q^{-N}};q;
{\frac{c^m q }{a_1 a_2 B}}
\right]
\nonumber
\end{eqnarray}
and
\begin{eqnarray}\label{Phi-W-1}
\MP{1,m}{N}
{ a \\ {\cdot}}
{ \{ b_k y_k \}_m \\ \{c y_k \}_m }
&=&
\frac
{(a)_N}
{(q)_N}
\prod_{1 \le k \le m}
\frac
{(b_{k} y_k)_N}
{(c y_k)_N}.
\end{eqnarray}
\end{lem}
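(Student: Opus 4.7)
The plan is to dispose of \eqref{Phi-W-1} directly from the definition of $\Phi^{1,m}_N$, and to derive \eqref{Phi-W} by collapsing the constrained double sum into a terminating single sum and recognizing its very well-poised signature.

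For \eqref{Phi-W-1}, the condition $|\gamma|=N$ with $n=1$ forces $\gamma=N$, so the summation collapses to one term: the Vandermonde ratio is the empty product $1$, the $(i,j)$-product reduces to $(a;q)_N/(q;q)_N$, and the product over $i=1$ and $k=1,\dots,m$ is exactly $\prod_k (b_k y_k;q)_N/(cy_k;q)_N$, giving \eqref{Phi-W-1} immediately.

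For \eqref{Phi-W}, I would parametrize the sum over $\{\gamma\in\mathbb{N}^2:|\gamma|=N\}$ by $k:=\gamma_1\in\{0,\dots,N\}$, so that $\gamma_2=N-k$, and then apply the standard reversal identity $(a;q)_{N-k} = (a;q)_N(-q/a)^k q^{\binom{k}{2}-Nk}/(q^{1-N}/a;q)_k$ to every Pochhammer of lower index $N-k$ (there are $m+2$ in the numerator and $m+2$ in the denominator of the defining summand, namely those with $i=2$). This extracts the $N$-dependent prefactor of \eqref{Phi-W} and leaves a terminating single sum over $k$. The $(x_2/x_1;q)_N$ in the extracted denominator arises from combining the reversal-produced $(qx_2/x_1;q)_N$ with the Vandermonde prefactor $1/(x_1-x_2)$. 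The remaining $k$-summand then has to be identified with the defining form of ${}_{2m+6}W_{2m+5}$: the Vandermonde factor $(x_1q^k-x_2q^{N-k})/(x_1-x_2)$, once the $N$-part is extracted, produces the very well-poised signature $(1-a_0q^{2k})/(1-a_0)$ with $a_0=q^{-N}x_1/x_2$; the reversal denominators $(q^{1-N}/a;q)_k$ match the $W$-series upper parameters $\{q^{1-N}/(cx_2 y_\ell)\}_\ell$ and $q^{-N}$; the original $k$-index factors $(a_1)_k$, $(a_2 x_1/x_2)_k$ and $\prod_\ell (b_\ell x_1 y_\ell/(x_2 y_m))_k$ furnish the remaining upper parameters $a_1$, $a_2/x_2$ and $\{b_\ell y_\ell\}_\ell$; the corresponding lower parameters are determined automatically by the very well-poised condition $a_0 q/a_j$.

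The main obstacle is the delicate bookkeeping of the overall argument of the $W$-series. After multiplying the $(-q/a)^k$ factors from the $m+2$ numerator reversals, the reciprocal $(-b/q)^k$ factors from the $m+2$ denominator reversals, and the $-q^{-k}/a_0$ coming from rewriting the Vandermonde into its VWP form, and after verifying that the $(m+2)$ copies of $q^{\binom{k}{2}-Nk}$ cancel exactly against the $(m+2)$ copies of $q^{Nk-\binom{k}{2}}$, one must confirm that the resulting geometric factor simplifies precisely to $(c^m q/(a_1 a_2 B))^k$, with all $x_i$ and $y_\ell$ dependence cancelling. This cancellation is forced by the homogeneity of the identity in each of the parameters, but verifying it explicitly — and in particular reconciling the $y_m$ normalizations inside the defining product with the $y_\ell$-free form appearing in \eqref{Phi-W} — is the essential technical calculation of the lemma.
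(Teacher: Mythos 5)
Your proposal is correct and follows essentially the same route as the paper: the $n=1$ case \eqref{Phi-W-1} is read off directly from the definition (the paper phrases this as taking the coefficient of $u^N$ in a one-variable ${}_{m+1}\phi_m$ series), and \eqref{Phi-W} is obtained by the substitution $\gamma_2=N-\gamma_1$ followed by Pochhammer reversal and recognition of the very well-poised structure, which is exactly what the paper compresses into the phrase ``elementary series manipulations.'' Your outline simply makes those manipulations explicit (reversal identity, cancellation of the $q^{\binom{k}{2}-Nk}$ factors, and the collapse of the argument to $c^m q/(a_1a_2B)$), so it matches the paper's proof in substance.
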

\begin{proof}
It is not hard to see that in the case when $n=1$, $\Phi^{1,m}_N$ is the coefficient of $u^N$ in
(1-dimensional) basic hypergeometric series:
\[
{}_{m+1} \phi_m \left[
	\begin{matrix}
		a, \{ b_k y_k \}_{m}\\
 			\{ c y_k \}_{m}
\end{matrix}
;q; u
\right]. 
\] 
\eqref{Phi-W} can be obtained by setting $\gamma_{2} = N - \gamma_1$
and elementary series manipulations.
\end{proof}
We mention that the homogeneous part of $A_{n+1}$ basic hypergeometric series $\Phi^{n+1, m}_N$ 
can be expressed in terms of $A_n$ very well-poised basic hypergeometric series. 
This fact has first appeared as Lemma 1.22 in S.C. Milne`s
paper \cite{MilneH-VWP} with a slightly different notation.
Readers interested in the elliptic case might also have a look at
Proposition 3.2 in \cite{KajiNou} for $A_n$ elliptic hypergeometric series.

\medskip 

By using Lemma 4.1. and some replacement of parameters,
we obtain the following transformations for bilinear sums of 
very well-poised basic hypergeometric series from the master formula \eqref{MF}.

\medskip

\noindent
{The $n_1 = m_1 = n_2 = m_2 = 2$ case of \eqref{MF}, i.e.,}
\begin{eqnarray}\label{1d-GBL}
&&
\sum_{K \in \mathbb N}
\frac{(b / t, c / t, d_1 / t, d_2 / t,
 \sigma q, \epsilon, \phi, q^{-N} )_K}
{(q,  1/ t,  q/ e, q/f, 
\sigma q/ \beta, \sigma q / \gamma, 
 \sigma q / \delta_1, \sigma q / \delta_2)_K}
q^K
\\
& \times &
{}_{10 } W_9 
\left[
{t q^{-K}};
{b}, 
{ c, d_1,  d_2}, 
{ e q^{-K}, f q^{-K}, q^{-K}}; q; 
{\frac{t^{3} q^{3}}{b c d_1 d_2 e^{} f }}
\right]
\nonumber
\\
& \times &
{}_{10} W_9
\left[
{\sigma q^{K}  }; 
{\beta}, {\gamma, \delta_1, \delta_2 }, 
{\epsilon  q^{K }, \phi q^K, q^{K-N}};q;
{
\frac{\sigma^{3} q^{N+ 3}}
{\beta \gamma \delta_1 \delta_2 \epsilon \phi}
}
\right]
\nonumber
\\
&=&
\phi^N
\frac{{(\sigma q / \delta_1 \phi, \sigma q / \delta_2 \phi,
 \epsilon, \sigma q / \gamma \phi,
 \sigma q, \sigma q / \beta \phi)_N}}
{(\sigma q / \delta_1, \sigma q / \delta_2,
 \epsilon  / \phi , (\sigma q / \gamma,
\sigma q/ \phi, \sigma q / \beta )_N}
\nonumber
\\
&\times &
\sum_{L \in \mathbb N}
\frac{{({ t q /c f}, t q / bf,  t q / d_1 f, t q / d_2 f,
q^{1-N} \epsilon / \phi, q^{-N } \phi / \sigma, 
\phi, q^{-N})_L}}
{(q, {e / f}, t q/ f, q/ f, 
q^{-N} \gamma \phi / \sigma, q^{-N } \beta \phi / \sigma, 
q^{-N} \delta_1 \phi / \sigma, 
q^{-N} \delta_2 \phi / \sigma  )_L}
q^L
\nonumber
\\
& \times &
{}_{10} W_9
\left[
{q^{-L} f / e};  {t q / c e},  
{ t q / b e,  t q / d_1 e, t q / d_2 e },
{ f q^{-L} / t, 
f q^{-L}, q^{-L}};q;
{\frac{b c  d_1 d_2 e f q^{-1}}{t^{3}}}
\right]
\nonumber
\\
& \times &
{}_{10} W_9
\left[
{q^{L-N}  \phi / \epsilon  };
 {\sigma q / \gamma \epsilon}, 
{\sigma q / \beta \epsilon, 
\sigma q / \delta_1 \epsilon,  \sigma q / \delta_2 \epsilon },
{q^{-N + L } \phi / \sigma, q^L \phi, q^{L-N}};q;
{
\frac
{\beta \gamma \delta_1 \delta_2 \epsilon \phi q^{-N-1}}
{\sigma^{3} }
}
\right],
\nonumber
\end{eqnarray}
while provided with the ``totally balancing condition'' \eqref{tbc},
becomes after the parameter substitution
\begin{eqnarray}\label{1d-bcGBL}
{t^{3} }{\sigma^{3} q^{N+ 4}}
&=&
{b c d_1 d_2 e f }{\beta \gamma \delta_1 \delta_2 \epsilon \phi}.
\end{eqnarray}
%

{The $n_1 = m_1 = n_2  = 2$ and $m_2 = 1$ case of  \eqref{MF}}:
\begin{eqnarray}\label{1d-m21BL}
&&
\sum_{K \in \mathbb N}
\frac{{(b/t, c/t, d_1 / t, d_2 / t,
\sigma q, \phi, q^{-N})_K}}
{(q, 1/t, q / e,  q/f, 
\sigma q/ \beta, \sigma q / \delta_1, \sigma q / \delta_2)_K}
q^K
\\
& \times &
{}_{10} W_9
\left[
{ t q^{-K}  };
{b},
{c, d_1, d_2 },
{ e q^{-K}, 
 f q^{-K}, q^{-K}};q;
{\frac{t^3 q^{3} } 
{ b c d_1 d_2 e f}}
\right]
\nonumber
\\
& \times &
{}_8 W_7
\left[ 
{\sigma q^{K} }; 
{\beta},
{ \delta_{1}, \delta_2  },
{ \phi q^{K }, q^{K-N}};q;
{\frac{ \sigma q^{N+2}}
{ \beta \delta_1 \delta_2 \phi }}
\right]
\nonumber
\\
&=& \phi^N
\frac{(\sigma q, \sigma q / \beta \phi,
\sigma q / \delta_1 \phi, \sigma q / \delta_2 \phi )_N}
{(\sigma q / \beta, \sigma q / \phi, 
\sigma q / \delta_1, \sigma q / \delta_2)_N}
\nonumber
\\
&\times &
\sum_{L \in \mathbb N}
\frac
{( t q / b f, t q / c f, t q / d_1 f, t q / d_2 f,
q^{-N} \phi / \sigma, \phi, q^{-N} )_L}
{(q, e/f, t q/f, q/f, 
q^{-N} \phi \beta / \sigma, q^{-N} \delta_1 \phi / \sigma , 
q^{-N} \delta_2 \phi / \sigma )_L}
q^L
\nonumber
\\
& \times &
{}_{10} W_9 
\left[
{f q^{-L} / e  }; {t q / c e},
{ t q / b e, t q / d_1 e, t q / d_2 e},
{  f q^{-L} / t, 
f  q^{-L}, q^{-L}};q;
{\frac
{b c d_1 d_2 e^{} f q^{-1}}
{t^{3 }}}
\right]
\nonumber
\end{eqnarray}
while provided with the ``totally balancing condition''
\begin{eqnarray}\label{1d-bcm21BL}
{t^{3} }{\sigma^{2} q^{N+ 3}}
&=&
{b c d_1 d_2 e^{} f }{\beta  \delta_1 \delta_2  \phi}.
\end{eqnarray}

Furthermore, the $n_1  = n_2  = 2$ and $m_1 = m_2 = 1$ case of
\eqref{MF} becomes
\goodbreak
\begin{eqnarray}\label{1d-m1m21BL}
&&
\sum_{K \in \mathbb N}
\frac
{(b/t, d_1 / t, d_2 / t, 
\sigma q, \phi, q^{-N})_K}
{(q, 1/t, q/e,
\sigma q/ \beta, \sigma q / \delta_1, \sigma q / \delta_2)_K}
\\
& \times &
{}_8 W_7 
\left[
{ t q^{-K}  };
b, 
{ d_1, d_2  }, 
{ e q^{-K},  q^{-K}};q;
{\frac{ t^{2} q^2 }
{ b d_1 d_2 e}}
\right]
\nonumber
\\
& \times &
{}_8 W_7
\left[
{ \sigma q^{K}  };
\beta,
{ \delta_1, \delta_2 }, 
{  \phi q^{ K },  q^{K-N}};q;
{\frac{ \sigma^2  q^{N+2}}
{ \beta \delta_1 \delta_2 \phi }}
\right]
\nonumber
\\
&=& \phi^N
\frac{( \sigma q, \sigma q / \beta \phi,
\sigma q / \delta_1 \phi, \sigma q / \delta_2 \phi )_N}
{( \sigma q / \beta, \sigma q / \phi, 
 \sigma q / \delta_1, \sigma q / \delta_2 )_N}
\nonumber
\\
&\times &
{}_6 \phi_5
\left[
\begin{matrix}
{t q / b e,  t q / d_1 e, t q / d_2 e},
{q^{-N}  \phi / \sigma, \phi, q^{-N}}
\\
{t q / e, q/ e},
{ q^{-N}  \beta \phi / \sigma, q^{-N} \delta_1 \phi / \sigma,  
 q ^{-N} \delta_2 \phi / \sigma }
\end{matrix}
;q;q
\right]
\nonumber
\end{eqnarray}
together with the ``totally balancing condition''
\begin{eqnarray}\label{1d-bcm1m21BL}
{t^{2} }{\sigma^{2} q^{N+ 2}}
&=&
{b  d_1 d_2 e}{\beta  \delta_1 \delta_2  \phi}.
\end{eqnarray}

Finally, we note that in the case when $n_1 = m_1 = n_2 = m_2 =1$,
\eqref{MF} reduces to the Whipple-Sears transformation formula \eqref{SearsT1}.

\medskip

We propose to consider bilinear transformations as natural generalizations
of multivariate hypergeometric transformations which extend Whipple-Sears transformations 
\eqref{SearsT1}. In this sense, the hypergeometric transformations and summations which
 we have obtained in Section 5 and 6 in \cite{Kaji1} (see also\cite{KajiBDT}),
such as multiple Whipple-Watson type transformations between ${}_8 W_7$ series 
and terminating balanced ${}_4 \phi_3$ series,
Dougall-Jackson summation for terminating balanced ${}_8 W_7$ series and Bailey type transformations
for terminating balanced ${}_{10} W_9$ series, can be interpreted as natural generalizations
of $q$-Pfaff-Saalsch\"{u}tz summation formula for terminating balanced ${}_3 \phi_2$ series.
Besides those bilinear transformations,
the master formula \eqref{MF} involves various types 
of transformation formulas for bilinear and linear sums of multivariate basic hypergeometric series.
Furthermore, we can obtain another master formula of alternate type 
and $q$-Pfaff-Saalsch\"{u}tz summation type and can deduce a family of transformations for 
bilinear and linear sums of very well-poised hypergeometric series from them
in the similar manner as in this paper.  
Details and other transformation formulas including linear sums and multivariate generalizations
will be given elsewhere.

}
\vspace{10mm}

\noindent
{\large  \bf Acknowledgments}

\medskip

The author expresses his sincere thanks to Professors Mourad Ismail, Erik Koelink and 
Masatoshi Noumi for comments. 


\vspace{15mm}

Department of Mathematics, Kobe University, Rokko-dai, Kobe
657-8501, Japan


\end{document}